\newtheorem{theorem}{Theorem}[section]
\newtheorem{proposition}[theorem]{Proposition}
\newtheorem{corollary}[theorem]{Corollary}
\newtheorem{lemma}[theorem]{Lemma}
\newtheorem{definition}[theorem]{Definition}
\newtheorem{example}[theorem]{Example}
\def\aut#1{\mathrm{Aut}(#1)}
\def\lmlt{\mathrm{RMlt}}
\def\setof#1#2{\{#1\, : \,#2\}}
\newcommand*\xbar[1]{%
   \hbox{%
     \vbox{%
       \hrule height 0.5pt 
       \kern0.5ex
       \hbox{%
         \kern-0.1em
         \ensuremath{#1}%
         \kern-0.1em
       }%
     }%
   }%
}
\makeindex \setlength{\evensidemargin}{-0.04cm}
\begin{document}

\markboth{Authors' Names}{Instructions for Typesetting Manuscripts using \LaTeX}


\title{Oriented singquandles and related algebraic structures}

\author{M. Bonatto}

\address{Department of Mathematics and Computer Science, University of Ferrara, Via Macchiavelli 30, 44121 Ferrara, Italy\\ marco.bonatto.87@gmail.com}
%
%

\keywords{Singular knots, link invariants, quandles.}

\subjclass[2020]{Mathematics Subject Classification 2020: 20L05, 57K10}

\begin{abstract}
In this paper we consider the algebraic structures related to invariants of topological structures introduced respectively in \cite{Stuq} and \cite{Bondles}. Our main results is to show how all these structures are closely related to each other using the language of binary operations. 
\end{abstract}
\maketitle

\section*{Introduction}

Algebraic invariants are relevant tools in topology, in particular towards the study of knots and virtual knots. The study of invariants based on binary algebraic structures have been started by the seminal work of Joyce \cite{J} and Matveev \cite{Matveev} that first introduced quandles as binary algebraic structures encoding the classical Reidemeister moves. The same approach has been adapted to virtual knots and the related Reidemeister moves and so {\it oriented singquandles} have been defined \cite{OrientedSing}. These structures can be studied in a purely algebraic way and an axiomatization of them (alternative to the original one) can be found in \cite{NostroSing}.

Similarly, some algebraic structures have also been defined in the context of topological study of proteins \cite{Bondles} and RNA folding \cite{Stuq} in order to produce invariants able to distinguish topological objects.

In the present paper we offer new axiomatizations of all such structures, namely {\it stuquandles} and {\it oriented bondles} and we show that all of them can be reduced to the concept of oriented singquandle. Indeed stuquandles are just given by a pair of oriented singquandles structure over the same underlying set (see Corollary \ref{corollary stuq}) and oriented bondles are stuquandles with an additional compatibility condition between the quandle and one of the two oriented singquandles structures (see Theorem \ref{theorem for bondles}).

Our approach is purely algebraic so we do not provide any detail on the underlying topology and we refer the reader to the above mentioned papers for all the missing information. Nonetheless, we wonder if a topological argument can show the same relation between the algebraic structures we deal with in the paper.

The paper is organized as follows: in the first section we recall the basics about binary algebraic structures and right quasigroups and quandles in particular. The second one is about Oriented singquandles, and we resume the main results of \cite{NostroSing}. In the next two sections we deal with stuquandles and oriented bondles and we show that such structured can be build from oriented singquandles. In the last section we show that the family of 2-reductive quandles provide examples of oriented bondles. 


\section{Algebraic structures}

\subsection{Binary algebraic structures}

A binary algebraic structure is a set $Q$ endowed with a set of binary operations. Let $(Q,\cdot)$ be a binary algebraic structure, we can define the left and right multiplications mappings with respect to $\cdot$ as 
\begin{align*}
L^{[\cdot]}_x:y\mapsto x\cdot y,\quad  R_x^{[\cdot]}:y\longrightarrow y\cdot x
\end{align*}
for every $x\in Q$ (we often denote such mappings just as $L_x$ and $R_x$). We can define the opposite of $\cdot$ as $x\cdot^{op} y =y\cdot x$ for every $x,y\in Q$.

Binary operations and binary functions are essentially the same thing. Indeed, given a binary algebraic structure $(Q,\cdot)$ we can define the map $M[\cdot](x,y)=x\cdot y$. On the other hand, given a binary map $f:Q\times Q\longrightarrow Q$ we can define the binary operation $x\cdot_f y=f(x,y)$ on $Q$.

A bijective map $f$ on $Q$ is an automorphism of $(Q,\cdot)$ if $f(x\cdot y)=f(x)\cdot f(y)$ for every $x,y\in Q$. We denote the group of automorphism of $(Q,\cdot)$ as $\aut{Q,\cdot}$.

A {\it right quasigroup} is a binary algebraic structure $(Q,\cdot,/)$ such that 
$$(x\cdot y)/y\approx x \approx (x/y)\cdot y$$
hold. The right multiplication mappings of $(Q,\cdot)$ are bijective and in particular $x/y=R_y^{-1}(x)$ for every $x,y\in Q$. Given $n\in \mathbb{Z}$ we define $(Q,\cdot^n,/^n)$ where 
$$x \cdot^n y=R_y^n(x),\quad x/^n y=R_y^{-n}(x).$$
Note that $(Q,\cdot^n,/^n)$ is also a right quasigroup.

We say that $(Q,\cdot,/)$ is:
\begin{itemize}
\item[(i)] {\it idempotent} if $x\cdot x \approx x$;
\item[(ii)]  {\it projection}, if $x\cdot y\approx x$;
\item[(iii)] {\it involutory} if $(x\cdot y)\cdot y\approx x$ (namely $\cdot=/$);
\item[(iv)] a {\it rack} if $ (x\cdot y)\cdot z\approx (x\cdot z)\cdot(y\cdot z)$.
\end{itemize}

Idempotent racks are called {\it quandles}. In the paper we deal with sets with several binary operations as $(Q,\cdot, *,/)$ and usually $(Q,*,/)$ is a quandle. We denote by $\rho_x$ the right multiplication with respect to $*$ for every $x\in Q$.

\subsection{Oriented singquandles}

\label{oriented}

In \cite{OrientedSing} an algebraic structure related to invariants of singular knots was introduced. An {\it oriented singquandle} is a tuple $(Q,*,/,R_1,R_2)$ where $(Q,*,/)$ is a (right) quandle and $R_1, R_2:Q\times Q\longrightarrow Q$ such that the following axioms hold:
\begin{align}
R_1(x,y)*z &=R_1(x*z,y*z) \label{O1}	\tag{OS1}\\
R_2(x,y)*z &=R_2(x*z,y*z)\label{O2}\tag{OS2}\\
(y*x)*z &=(y*R_1(x,z))*R_2(x,z) \label{O3}\tag{OS3}\\
R_1(x,y)*R_2(x,y) &=R_2(y,x*y) \label{O5}\tag{OS4}\\
R_2(x,y) &=R_1(y,x*y)\label{O4}\tag{OS5}
\end{align}

Setting $y\cdot x=R_1(x,y)$, and taking \eqref{O4} as the definition of $R_2$ by $\cdot$ and $*$ as $R_2(x,y)=(x*y)\cdot y$,  we can rewrite the definition as follows. 

\begin{definition}\label{Oriented def2}
An {\it oriented singquandle} is a binary algebraic structure $(Q,\cdot,*,/)$ where $(Q,*,/)$ is a (right) quandle and
such that the following identities hold:
\begin{align}
(y\cdot x)*z &\approx (y*z)\cdot (x*z) \label{O21}\tag{OS1'}	\\
(x\cdot (y\cdot x))*z &\approx(x*z)\cdot((y*z)\cdot (x*z)) \label{O22}\tag{OS2'}\\
(y*x)*z &\approx(y*(z\cdot x))*((x*z)\cdot z)) \label{O23}\tag{OS3'}\\
(y\cdot x)*((x*y)\cdot y) &\approx(y*(x*y))\cdot (x*y) \label{O25}\tag{OS4'}
%
\end{align}
\end{definition}

Clearly, if $(Q,*,/,R_1,R_2)$ is an oriented singquandle in the sense of \cite[]{OrientedSing}, then $(Q,(\cdot_{R_1})^{op},*,/)$ is an oriented sinquandle in the sense of Definition \ref{Oriented def2}. On the other hand, if $(Q,\cdot,*,/)$ satisfies the axioms in Definition \ref{Oriented def2}, then $(Q,*,/,M[{\cdot^{op}}],T[\cdot])$ where $T[\cdot](x,y)=(x\cdot y)*y$  is an oriented singquandle in the sense of \cite[]{}. Such assignements provide a one-to-one correspondence between the two type of structures. Thus, we stick to \ref{Oriented def2} as the definition of oriented singquandles.

We already showed in \cite{NostroSing} that the axioms above can be simplified as follows.

\begin{proposition}\label{Car Oriented}\cite[Proposition 3.2]{NostroSing}
Let $(Q,\cdot,*,/)$ be a binary algebraic structure. The following are equivalent:
\begin{itemize}
\item[(i)] $(Q,\cdot,*,/)$ is an oriented singquandle.
\item[(ii)] $(Q,*,/)$ is a quandle and the following identities hold:
\begin{align}
(x\cdot y)*z&\approx(x*z)\cdot (y*z),\label{Oriented1}\\
(z*y)*x &\approx(z*(x\cdot y))*((y*x)\cdot x)\label{Oriented2}.
\end{align}

\end{itemize} 
\end{proposition}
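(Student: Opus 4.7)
My plan is to prove the two directions separately. The direction (i) $\Rightarrow$ (ii) is immediate: a straightforward renaming of variables shows that \eqref{Oriented1} is precisely \eqref{O21} and that \eqref{Oriented2} is precisely \eqref{O23}, so those identities automatically hold in any oriented singquandle.

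For (ii) $\Rightarrow$ (i), the only new content is to derive \eqref{O22} and \eqref{O25} from \eqref{Oriented1}, \eqref{Oriented2}, and the quandle axioms (idempotence and the rack identity). The identity \eqref{O22} should come out in one line: applying \eqref{Oriented1} twice to $(x\cdot(y\cdot x))*z$, first to the outer operation and then to the inner factor $(y\cdot x)*z$, yields the required $(x*z)\cdot((y*z)\cdot(x*z))$ directly.

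The real obstacle is \eqref{O25}. My plan is the following rewriting chain, starting from the left-hand side $(y\cdot x)*((x*y)\cdot y)$. First, I would use idempotence to rewrite $y\cdot x$ as $(y\cdot x)*(y\cdot x)$, which makes the expression match the right-hand side of \eqref{Oriented2} under the substitution $z\leftarrow y\cdot x$, $y\leftarrow x$, $x\leftarrow y$. Applying \eqref{Oriented2} in reverse then collapses the expression to $((y\cdot x)*x)*y$. From here, the rack identity rewrites this as $((y\cdot x)*y)*(x*y)$. Two successive applications of \eqref{Oriented1}, together with one more use of idempotence in the form $(x*y)*(x*y)=x*y$, should transform $((y\cdot x)*y)*(x*y)$ into the required right-hand side $(y*(x*y))\cdot(x*y)$.

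The non-obvious step is the initial pattern-match: recognizing that $(y\cdot x)*((x*y)\cdot y)$ is an instance of the right-hand side of \eqref{Oriented2} only after an idempotence expansion is inserted. Once that is spotted, the rest is a routine symbolic manipulation using the rack axiom, \eqref{Oriented1}, and idempotence, with no further appeal to \eqref{Oriented2}.
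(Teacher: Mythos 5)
Your argument is correct. Note first that this paper does not actually prove the proposition: it is imported verbatim from \cite[Proposition 3.2]{NostroSing}, so there is no in-text proof to compare against, and your write-up supplies a self-contained derivation. The direction (i)~$\Rightarrow$~(ii) is indeed just variable renaming: \eqref{Oriented1} is \eqref{O21} and \eqref{Oriented2} is \eqref{O23} under $y\mapsto z$, $x\mapsto y$, $z\mapsto x$. For the converse, your one-line derivation of \eqref{O22} from two applications of \eqref{Oriented1} is right, and your chain for \eqref{O25} checks out: inserting idempotence to get $((y\cdot x)*(y\cdot x))*((x*y)\cdot y)$, matching it to the right-hand side of \eqref{Oriented2} with $z\leftarrow y\cdot x$, $x\leftarrow y$, $y\leftarrow x$, collapsing to $((y\cdot x)*x)*y$, then using the rack identity and \eqref{Oriented1} twice. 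The only cosmetic gap is that this last stage needs idempotence twice, not once: besides $(x*y)*(x*y)=x*y$ you also need $y*y=y$ to reduce $(y*y)\cdot(x*y)$ to $y\cdot(x*y)$ after the first application of \eqref{Oriented1}. That omission is harmless, and the identification of the left-hand side of \eqref{O25} as a disguised instance of the right-hand side of \eqref{Oriented2} is exactly the non-obvious step the equivalence hinges on.
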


Note that if $(Q,*,/)$ is a projection quandle, $(Q,\cdot,*,/)$ is an oriented singquandle for every binary operation $\cdot$. On the other hand if $(Q,\cdot)$ is a projection quandle, then $(Q,\cdot,*,/)$ is an oriented singquandle for every quandle $(Q,*,/)$.

\begin{example}
Let $(Q,*,/)$ be a quandle. Then $(Q,*,*,/)$ is an oriented singquandle if and only if 
\begin{align*}
\rho_x \rho_y=\rho_{(y*x)*x}\rho_{x*y}&=\rho_x^2 \rho_y \rho_x^{-2} \rho_y \rho_x \rho_y^{-1}\\
&=\rho_x \rho_{y*x}\rho_{y/x}\rho_y^{-1}
\end{align*}
i.e. $\rho_y^2=\rho_{y*x}\rho_{y/x}$. Note that if $(Q,*,*,/)$ is an oriented singquandle if and only if $(Q,/,/,*)$ is an oriented sinquandle. For instance if $(Q,*,/)$ is involutory then $(Q,*,*,/)$ is an oriented singquandle.
\end{example}



\subsection{Stuquandles}

In \cite{Stuq} a new algebraic structure related to stuck knots have been defined. Let us recall the definition: let $(Q, *,/,R_1,R_2)$ be an oriented singquandle and let $R_3,R_4:Q\times Q\longrightarrow Q$. If the following axioms hold for all $x, y, z \in Q$
\begin{align}
R_3(y, x) * R_4(y, x) &= R_4(x * y, y),\label{eq1_0}\tag{ST1}\\
R_4(y, x) &= R_3(x * y, y),\label{eq2}\tag{ST2}\\
R_3(y * x, z) &= R_3(y, z/x) * x,\label{eq3}\tag{ST3}\\
R_4(y, z/x) &= R_4(y* x, z)/x,\label{eq4}\tag{ST4}\\
(x * R_4(y, z))/y &= (x/R_3(y, z))* z.\label{eq5}\tag{ST5}
\end{align}
we say that $(Q,*,R_1,R_2,R_3,R_4)$ is a stuquandle.


Similarly to what we did for Oriented singquandles we are going to translate the definition in terms of binary operations.

If we denote by $R_3(x,y)=x\circ y$, then \eqref{eq2} shows that $R_4$ is defined in terms of $*$ and $\circ$ as $R_4(x,y)=(y*x)\circ x$ for every $x,y\in Q$. Moreover, \eqref{eq3} is just saying that $\rho_x\in \aut{Q,\circ}$ for every $x\in Q$. The identity \eqref{eq4} follows since $\rho_x \in \aut{Q,*}$ and $R_4$ is defined in terms of $*$ and $\circ$. Hence we can rewrite the definition above as follows.

\begin{definition}
A stuquandle is a binary algebraic structure $(Q,\cdot,\circ,*,/)$ such that $(Q,\cdot,*,/)$ is an oriented sinquandles and the following identities hold:
\begin{align}
(y\circ x) * ((x*y)\circ y) &\approx (y*(x*y))\circ (x * y)\label{eq1}\tag{ST1'},\\
(x \circ y)* z &\approx (x*z)\circ (y*z),\label{eq3_2}\tag{ST3'}\\
(x * ((z*y)\circ y)))/y &\approx (x/(y\circ z))* z. \label{eq4_2}\tag{ST5'}
\end{align}
\end{definition}
%

The next theorem shows that equations \eqref{eq1}, \eqref{eq3_2} and \eqref{eq4_2} provide an equivalent axiomatizations of oriented singquandles.
\begin{theorem}
Let $(Q,*,/)$ be a quandle. The following are equivalent:
\begin{itemize}
\item[(i)] the binary algebraic structure $(Q,\circ,*,/)$ satisfies \eqref{eq1}, \eqref{eq3_2} and \eqref{eq4_2}.
\item[(ii)] $(Q,\circ,*,/)$ is an oriented singquandle (in the sense of Definition \ref{Oriented def2}).
\end{itemize}
\end{theorem}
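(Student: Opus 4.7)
The plan is to observe that, given the quandle structure of $(Q,*,/)$, the three identities in (i) can be identified with the two-identity characterization of oriented singquandles provided by Proposition \ref{Car Oriented}.

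First, I would point out that \eqref{eq3_2} is literally identical to \eqref{Oriented1}, so this axiom can be taken for granted on both sides of the claimed equivalence.

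The key step is to show that \eqref{eq4_2} is equivalent to \eqref{Oriented2} modulo the right quasigroup axioms of $(Q,*,/)$. Starting from \eqref{eq4_2}, I would apply the right multiplication $\rho_y$ to both sides (using that $\rho_y$ is the inverse of $/y$): the left-hand side becomes $x*((z*y)\circ y)$, and the right-hand side becomes $((x/(y\circ z))*z)*y$. Substituting $x=u*(y\circ z)$, which is legitimate since $\rho_{y\circ z}$ is a bijection, produces
\[
(u*z)*y \approx (u*(y\circ z))*((z*y)\circ y),
\]
and a suitable relabeling of variables (e.g.\ $u\mapsto z$, $z\mapsto y$, $y\mapsto x$) turns this into \eqref{Oriented2}. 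Every step is reversible, so \eqref{eq4_2} $\Leftrightarrow$ \eqref{Oriented2} over a quandle.

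Finally, I would note that \eqref{eq1} coincides with axiom \eqref{O25} of Definition \ref{Oriented def2}, which by Proposition \ref{Car Oriented} is a consequence of \eqref{Oriented1} and \eqref{Oriented2}. Putting the three observations together, (i) $\Rightarrow$ (ii) uses only that \eqref{eq3_2} gives \eqref{Oriented1} and \eqref{eq4_2} gives \eqref{Oriented2}, so that Proposition \ref{Car Oriented} applies (in particular \eqref{eq1} turns out to be redundant on this side); conversely, for (ii) $\Rightarrow$ (i), \eqref{Oriented1} gives \eqref{eq3_2}, \eqref{Oriented2} gives \eqref{eq4_2}, and the pair \eqref{Oriented1}--\eqref{Oriented2} yields \eqref{eq1} through \eqref{O25}. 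The only delicate point, and the one to be carried out carefully in the write-up, is the manipulation that reduces \eqref{eq4_2} to \eqref{Oriented2}; everything else is bookkeeping and an appeal to the previously established Proposition~\ref{Car Oriented}.
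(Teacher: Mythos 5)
Your proposal is correct and follows essentially the same route as the paper: it identifies \eqref{eq3_2} with \eqref{Oriented1}, rewrites \eqref{eq4_2} (by composing with $\rho_y$ and using bijectivity of $\rho_{y\circ z}$, which is exactly the paper's manipulation of right-multiplication maps) into the form $(x*z)*y \approx (x*(y\circ z))*((z*y)\circ y)$, i.e.\ \eqref{Oriented2} up to relabeling, and then observes that \eqref{eq1} is the identity \eqref{O25} and hence redundant by Proposition~\ref{Car Oriented}. No gaps.
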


\begin{proof}
Note that \eqref{eq4_2} is equivalent to have $\rho_y^{-1}	\rho_{(z*y)\circ y}=\rho_z\rho_{y\circ z}^{-1}$, i.e. $\rho_y\rho_z=\rho_{(z*y)\circ y}\rho_{y\circ z}$. Namely \eqref{eq4_2} can be rewritten as
\begin{align}
(x*z)*y =(x*(y\circ z))*((z*y)\circ y).\label{ST5''}\tag{ST5''}
\end{align}
Note that \eqref{eq1} is the very same identity as \eqref{O25}, and so according to Proposition \eqref{Car Oriented} it follows by  \eqref{eq3_2} and \eqref{ST5''}.
%
\end{proof}

\begin{corollary}\label{corollary stuq}
Let $(Q,\cdot,\circ,*,/)$ be a binary algebraic structure. The following are equivalent:
\begin{itemize}
\item[(i)] $(Q,\cdot,\circ,*,/)$ is a stuquandle.
\item[(ii)] $(Q,\cdot,*,/)$ and $(Q,\circ,*,/)$ are oriented singquandles. 
\end{itemize} 
\end{corollary}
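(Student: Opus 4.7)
The plan is to observe that the statement reduces, with essentially no further work, to the theorem that immediately precedes it. The definition of a stuquandle $(Q,\cdot,\circ,*,/)$ already decomposes into two disjoint requirements: first, that $(Q,\cdot,*,/)$ is an oriented singquandle, and second, that the additional identities \eqref{eq1}, \eqref{eq3_2} and \eqref{eq4_2} involving $\circ$ (and $*$, $/$) hold. I would therefore just match these two requirements against the two clauses of (ii).

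For the direction (i) $\Rightarrow$ (ii): assume $(Q,\cdot,\circ,*,/)$ is a stuquandle. The first half of the definition gives directly that $(Q,\cdot,*,/)$ is an oriented singquandle. The second half says that $(Q,\circ,*,/)$ satisfies \eqref{eq1}, \eqref{eq3_2}, \eqref{eq4_2}; since $(Q,*,/)$ is a quandle, the preceding theorem applies to the structure $(Q,\circ,*,/)$ and tells us that this is equivalent to $(Q,\circ,*,/)$ being an oriented singquandle. For the direction (ii) $\Rightarrow$ (i): if both $(Q,\cdot,*,/)$ and $(Q,\circ,*,/)$ are oriented singquandles, then in particular $(Q,*,/)$ is a quandle; the first of the two singquandle hypotheses is the first condition in the definition of a stuquandle, and by the preceding theorem applied in the converse direction to $(Q,\circ,*,/)$ we recover \eqref{eq1}, \eqref{eq3_2}, \eqref{eq4_2}, which is the remaining content of being a stuquandle.

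There is no real obstacle here because the bookkeeping has already been done in the previous theorem; the only point to be careful about is that the identities involving $\cdot$ in the stuquandle definition and those involving $\circ$ are kept strictly separate, with no mixed identity connecting $\cdot$ and $\circ$ except through the common quandle operation $*$. Since none of \eqref{eq1}, \eqref{eq3_2}, \eqref{eq4_2} mentions $\cdot$, and the oriented-singquandle axioms for $(Q,\cdot,*,/)$ do not mention $\circ$, the two halves of (ii) are genuinely independent and the equivalence is immediate.
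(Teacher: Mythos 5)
Your proposal is correct and follows exactly the route the paper intends: the corollary is an immediate consequence of the preceding theorem, since the stuquandle axioms split into the requirement that $(Q,\cdot,*,/)$ be an oriented singquandle plus the identities \eqref{eq1}, \eqref{eq3_2}, \eqref{eq4_2} on $(Q,\circ,*,/)$, which the theorem shows are equivalent to $(Q,\circ,*,/)$ being an oriented singquandle. Your remark that the $\cdot$-axioms and the $\circ$-identities interact only through the common quandle $(Q,*,/)$ is precisely the point that makes the equivalence immediate.
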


\begin{example}
Let $(Q,\cdot,*,/)$ be an oriented singquandle. Then $(Q,\cdot, \cdot, *,/)$ is a stuquandle. 
\end{example}

\subsection{Oriented Bondles}


Let us introduce the algebraic structure defined in \cite{Bondles} in connection with the topological study of the structure of proteins. Let $(Q, *,/,R_1,R_2)$ be an oriented singquandle and let $R_3:Q\times Q\longrightarrow Q$. If the following axioms hold for all $x, y, z \in Q$
\begin{align}
R_3(y*z, x *z)& = R_3(y, x) * z\label{ax1}\tag{OB1}\\
R_3(x/ z, y / z) &= R_3(x , y) / z\label{ax2}\tag{OB2}\\
(z / R_3(x, y)) * x &= (z /y) * R_3(y, x)\label{ax3}\tag{OB3}\\
R_3(x, y) /y &= R_3(x /R_3(y, x), y).\label{ax4}\tag{OB4}
\end{align}
we say that $(Q,*,R_1,R_2,R_3)$ is an oriented bondle.

Let us switch to binary operations by setting $R_3(x,y)=y\bullet x$. The identities \eqref{ax1} and \eqref{ax2} are both equivalent to have that $\rho_x\in \aut{Q,\bullet}$ for every $x\in Q$, thus we can keep just \eqref{ax1}. 

\begin{definition}
An oriented bondle is a binary algebraic structure $(Q,\cdot,\bullet,*,/)$ such that $(Q,\cdot,*,/)$ is an oriented singquandle and the following identities hold:
\begin{align}
(x\bullet y)*z&\approx(x*z)\bullet (y*z),\tag{OB1'}\\
(z / (y\bullet x)) * x &\approx(z /y) * (x\bullet  y),\label{ax32b}\tag{OB3'}\\
(y\bullet x) /y &\approx y\bullet((x /(x\bullet y)) .\label{ax42b}\tag{OB4'}
\end{align}
\end{definition}

The following theorem shows that also oriented bondles are constructed by a pair of oriented singquandles with an additional compatibility condition.

\begin{theorem}\label{theorem for bondles}
Let $(Q,\circ,\bullet,*)$ be a binary algebraic structure. The following are equivalent:
\begin{itemize}
\item[(i)] $(Q,\circ,\bullet,*,/)$ is an oriented bondle.
\item[(ii)] $(Q,\circ,*,/)$ and $(Q,\bullet,*,/)$ are oriented singquandles and 
\begin{align}\label{eq_bondles}
(y* x)\bullet x \approx (y*(x \bullet y))\bullet x 
\end{align} holds.
\end{itemize}
\end{theorem}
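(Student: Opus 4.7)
The proof proceeds by translating the defining axioms into identities involving the right multiplications $\rho_x$ of $*$, exploiting that (OB1') is equivalent to saying every $\rho_z$ is an automorphism of $\bullet$, with corollaries such as $(y*x)\bullet x \approx (y\bullet x)*x$ and the conjugation rules obtained from $\rho_z(a\bullet b)=\rho_z(a)\bullet\rho_z(b)$, and then combining these with the rack identity $\rho_{b*c}=\rho_c\rho_b\rho_c^{-1}$ for $*$. Proposition \ref{Car Oriented}, which characterises oriented singquandles by \eqref{Oriented1} and \eqref{Oriented2} alone, is the central organising tool in both directions.

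For (i) $\Rightarrow$ (ii) I would first derive the compatibility \eqref{eq_bondles}. Specialising \eqref{ax32b} at $z=y\bullet x$ yields $(y\bullet x)*x \approx ((y\bullet x)/y)*(x\bullet y)$; the (OB1')-commutation rewrites the left-hand side as $(y*x)\bullet x$, while \eqref{ax42b} and another application of (OB1') rewrite the right-hand side as $(y*(x\bullet y))\bullet x$. With \eqref{eq_bondles} established, \eqref{Oriented2} for $\bullet$ follows from the operator form of \eqref{ax32b}, namely $\rho_x\rho_y=\rho_{x\bullet y}\rho_{(y\bullet x)/y}$: using \eqref{ax42b} together with (OB1') and the rack identity for $*$, the second factor conjugates to $\rho_{(y*(x\bullet y))\bullet x}$, and compatibility then turns this into $\rho_x\rho_y = \rho_{(y*x)\bullet x}\rho_{x\bullet y}$, the operator form of \eqref{Oriented2} for $\bullet$.

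For (ii) $\Rightarrow$ (i) the critical observation is that Proposition \ref{Car Oriented} automatically supplies the redundant axiom \eqref{O25} for $\bullet$: $(y\bullet x)*((x*y)\bullet y)\approx(y*(x*y))\bullet(x*y)$. Simplifying its two $\bullet$-terms via the (OB1')-commutations $(x*y)\bullet y = (x\bullet y)*y$ and $y\bullet(x*y)=(y\bullet x)*y$, and then cancelling an outer factor of $\rho_y$ via the rack identity, reduces \eqref{O25} to $((y\bullet x)/y)*(x\bullet y) \approx (y*x)\bullet x$. Replacing $(y*x)\bullet x$ by $(y*(x\bullet y))\bullet x$ through \eqref{eq_bondles}, rewriting the latter as $(y\bullet(x/(x\bullet y)))*(x\bullet y)$ via (OB1'), and cancelling $\rho_{x\bullet y}$, produces \eqref{ax42b}. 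Finally, \eqref{ax32b} is extracted from the operator form of \eqref{Oriented2} for $\bullet$: applying \eqref{eq_bondles} and (OB1') as above one reaches $\rho_x\rho_y = \rho_{x\bullet y}\rho_{y\bullet(x/(x\bullet y))}$, and the newly derived \eqref{ax42b} lets one identify the last factor with $\rho_{(y\bullet x)/y}$. The main subtlety is spotting that \eqref{ax42b}, an element-level identity not visible from (ii) on its face, is forced precisely by the redundant axiom \eqref{O25} for $\bullet$ combined with compatibility.
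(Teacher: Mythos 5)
Your proof is correct and follows essentially the same route as the paper: both arguments translate the bondle axioms into identities among the right translations $\rho_x$ of $*$, exploit the (OB1')-commutation between $\bullet$ and $*$, and lean on Proposition \ref{Car Oriented} to reduce the oriented-singquandle condition on $(Q,\bullet,*,/)$ to \eqref{Oriented1} and \eqref{Oriented2}. The only difference is organisational: the paper establishes the two clean equivalences ``(OB1')$+$(OB3') $\Leftrightarrow$ $(Q,\bullet,*,/)$ is an oriented singquandle'' (using neither (OB4') nor \eqref{eq_bondles}) and, given that structure, ``(OB4') $\Leftrightarrow$ \eqref{eq_bondles}'', which delivers both implications simultaneously, whereas you interleave the axioms (deriving \eqref{eq_bondles} by specialising (OB3') at $z=y\bullet x$ and routing \eqref{Oriented2} through (OB4')), reaching the same operator computations by a slightly longer path.
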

\begin{proof}

The identity \eqref{ax32b} is equivalent to 
\begin{align*}
\rho_{x\bullet y} \rho_y^{-1} &=\rho_x \rho_{y \bullet x}^{-1}\\
&=\rho_x 	\rho_{y\bullet x}^{-1} \rho_x^{-1}\rho_x=\rho_{(y\bullet x)*x}^{-1}\rho_x
\end{align*}
i.e.
\begin{align*}
\rho_x \rho_y(z)&=(z*y)*x\\
\rho_{(y\bullet x)*x}\rho_{x\bullet y}(z)&=(z*(x\bullet y))*((y\bullet x)*x)\\
&=(z*(x\bullet y))*((y* x)\bullet (x*x))\\&=(z*(x\bullet y))*((y* x)\bullet x)
\end{align*}
Therefore $(Q,\bullet,*,/)$ is an oriented singquandle. 
%
%
%

Let us consider \eqref{ax42b} and write is as
\begin{align*}
y\bullet x&=(y\bullet (x/(x\bullet y))*y=y\bullet ((x*y)/((x*y)\bullet y))\\&=((y*((x*y)\bullet y))\bullet (x*y))/((x*y)\bullet y)=\rho_{(x*y)\bullet y}^{-1}((y*((x*y)\bullet y))\bullet (x*y)).
\end{align*}
Since $\rho_{(x*y)\bullet y}^{-1}=(\rho_y \rho_x \rho_{y\bullet x}^{-1})^{-1}=\rho_{y\bullet x} \rho_x^{-1} \rho_y^{-1} $ we have that 
\begin{align*}
 \rho_x \rho_{y\bullet x}^{-1} (y\bullet x)&=\rho_x(y\bullet x)=(y*x)\bullet x=\rho_{y}^{-1}((y*((x*y)\bullet y))\bullet (x*y))=(y*(x\bullet y))\bullet x.
\end{align*}
and so \eqref{ax42b} is equivalent to \eqref{eq_bondles}.

\end{proof}

The identity \eqref{eq_bondles} does not hold for every oriented singquandles. Let $Q=\{1,2,3\}$ and
$$
(Q,\bullet)=\begin{tabular}{| c c c|}
\hline
2 & 1 &3\\
2 & 2 & 2\\
1& 3 &2\\
\hline
\end{tabular}\,,\quad (Q,*)=(Q,/)=\begin{tabular}{| c c c|}\hline
1 & 3 &1\\
2 & 2 & 2\\
3& 1 &3 \\\hline
\end{tabular}\,,
$$
Then $(Q, \bullet, *,/)$ is a oriented singquandles that does not satisfies \eqref{eq_bondles}.

\begin{example}
Let $(Q,\circ,\bullet,*,/)$ be an oriented bondle. If the right multiplication mappings with respect to $\bullet$ are injective, then $y* x=y*(x \bullet y)$. If $\bullet=*$ then $y*x=y*(x*y)=(y*x)*y$. Then $(Q,*,*,*,/)$ is a bondle if and only if $\rho_y^2=\rho_{y*x} \rho_{y/x}$ and $y*x\approx (y*x)*y$ (or equivalently $y\approx y*(y/x)$) holds. If $(Q,*,/)$ is involutory, i.e. $*=/$ we have that $(Q,*,*,*,*)$ is a bondle if and only if $y\approx y*(y*x)$ holds.
\end{example}

\section{2-reductive quandles}

A quandle is {\it $2$-reductive} if the following identity hold:
$$x*(y*z)\approx x*(y*u).$$
%
%

In \cite{Medial}, a construction for such quandles was provided: let $\setof{A_i}{i\in I}$ a family of abelian groups, $\setof{c_{i,j}}{i,j\in I}$ such that $c_{i,j}\in A_j$ for every $i,j\in I$ and $A_j=\langle c_{i,j},\, i\in I\rangle$. We define two binary operations on the set $\mathcal{A}=\bigcup_{i\in I} A_i$ as
$$x*y=x+c_{i,j},\quad x/y=x-c_{i,j}$$
whenever $x\in A_j$ and $y\in A_i$. Then $(\mathcal{A},*,/)$ is a quandle called {\it affine mesh} and we denote it by $((A_i)_{i\in I},(c_{i,j})_{i,j\in I}))$. Affine meshes completely describe $2$-reductive quandles.

\begin{proposition}\cite[Theorem 6.9]{Medial} \cite[Theorem 4.1]{Dis}
Let $(Q,*,/)$ be a quandle. The following are equivalent:
\begin{itemize}
\item[(i)] $(Q,*,/)$ is $2$-reductive.
\item[(ii)] $\lmlt(Q,*,/)$ is abelian.
\item[(iii)] $(Q,*,/)$ is isomorphic to an affine mesh $((A_i)_{i\in I},(c_{i,j})_{i,j\in I}))$.
\end{itemize}
\end{proposition}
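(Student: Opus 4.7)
The plan is to prove the equivalence via (i) $\Leftrightarrow$ (ii) by a direct manipulation, (iii) $\Rightarrow$ (i) by a one-line check, and finally (ii) $\Rightarrow$ (iii) through an explicit construction of an affine mesh; the last implication is where the real work lies.

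For (i) $\Leftrightarrow$ (ii), I would rewrite 2-reductivity $x*(y*z)\approx x*(y*u)$ as $R_{y*z}=R_{y*u}$ for all $y,z,u$. Specialising $u=y$ and invoking idempotency $y*y=y$ yields $R_{y*z}=R_{y}$ for all $y,z$. The rack axiom in $\RMlt$ form reads $R_{y*z}=R_z R_y R_z^{-1}$, so the preceding equation is precisely $[R_y,R_z]=1$ for all $y,z\in Q$; since $\{R_y:y\in Q\}$ generates $\RMlt$, this is equivalent to $\RMlt$ being abelian, and every step is reversible. For (iii) $\Rightarrow$ (i), a quick computation in the affine mesh: if $x\in A_j$, $y\in A_i$, $z\in A_k$, then $y*z=y+c_{k,i}\in A_i$ and hence $x*(y*z)=x+c_{i,j}$, which does not depend on $z$.

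For the main implication (ii) $\Rightarrow$ (iii), let $\{A_i\}_{i\in I}$ be the orbits of $\RMlt$ on $Q$. Combining commutativity of $\RMlt$ with the rack identity $R_{\pi(y)}=\pi R_y\pi^{-1}$, one obtains $R_y=R_{y'}$ whenever $y$ and $y'$ lie in the same orbit. Fix a base point $e_i\in A_i$ for each $i$ and transport the abelian group structure of $\RMlt/\mathrm{Stab}(e_i)$ to $A_i$ via $\pi\mathrm{Stab}(e_i)\mapsto\pi(e_i)$. In this structure every $\pi\in\RMlt$ acts on each $A_j$ as translation by a fixed element, because the action factors through the abelian quotient $\RMlt/\mathrm{Stab}(e_j)$. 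I would then define $c_{i,j}\in A_j$ to be the translation induced by $R_y$ for any $y\in A_i$, well defined by the orbit-invariance of $R_y$ above. The identity $x*y=x+c_{i,j}$ holds by construction, and $A_j=\langle c_{i,j}:i\in I\rangle$ follows because $\RMlt$ is generated by the $R_y$'s and surjects onto $A_j$.

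The principal obstacle is this last construction: choosing base points consistently, verifying orbit-invariance of right translations, and checking that the induced affine mesh operation agrees with the original quandle multiplication. The bookkeeping is delicate, although no ideas beyond commutativity of $\RMlt$ and the rack axiom are required.
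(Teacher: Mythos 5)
The paper does not actually prove this proposition: it is quoted with citations to the literature (Theorem 6.9 of the affine-mesh paper and Theorem 4.1 of the displacement-group paper), so there is no in-paper argument to compare against. Your proof is correct and follows the standard route of those references --- (i)$\Leftrightarrow$(ii) via $R_{y*z}=R_zR_yR_z^{-1}$ together with idempotency, and the orbit/torsor construction on the $\RMlt$-orbits for (ii)$\Rightarrow$(iii) --- the only details worth adding being the routine induction showing $R_{\pi(y)}=\pi R_y\pi^{-1}$ for all $\pi\in\RMlt$ (not just generators), and the observation that $c_{j,j}=R_{e_j}(e_j)=e_j*e_j=e_j=0$, which is what makes the resulting mesh idempotent.
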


In particular, note that if $\lmlt(Q,*,/)$ is abelian, then $\lmlt(Q,*^n,/^n)$ is also abelian for every $n\in \mathbb{Z}$.

Let us conclude the section showing that $2$-reductive quandles can be used to construct examples of oriented singquandles and oriented bondles.

\begin{lemma}\label{oriented lmlt abelian}
Let $(Q,*,/)$ be a $2$-reductive quandle. Then $(Q,*^n,*^m,/^m)$ is an oriented singquandle for every $n,m\in \mathbb{Z}$.
\end{lemma}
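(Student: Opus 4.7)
The plan is to apply the characterization of oriented singquandles given by Proposition \ref{Car Oriented}. Setting $\cdot:=*^n$ (so that $y\cdot x=R_x^n(y)$) and writing $\rho_x:=R_x^m$ for the right multiplication of $*^m$, I need to verify that $(Q,*^m,/^m)$ is a quandle and that the two identities \eqref{Oriented1} and \eqref{Oriented2} hold. Before doing so I would isolate the structural consequence of $2$-reductivity that drives the whole argument: since $\lmlt(Q,*,/)$ is abelian, the conjugation formula $R_{R_z(y)}=R_z R_y R_z^{-1}$ valid in any rack collapses to $R_{R_z(y)}=R_y$, and iterating yields $R_{R_z^k(y)}=R_y$ for every $k\in\mathbb{Z}$.

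Idempotence of $*^m$ is immediate from $R_x(x)=x$. For the rack identity of $(Q,*^m,/^m)$, the observation above gives $\rho_{\rho_z(y)}=\rho_y$, and then abelianness of $\lmlt$ yields $\rho_z\rho_y=\rho_y\rho_z=\rho_{\rho_z(y)}\rho_z$, as needed. Identity \eqref{Oriented1}, which in our setup reads $R_z^m R_y^n=R_{R_z^m(y)}^n R_z^m$, is just the $n$-th power of the general conjugation relation $R_z^m R_y R_z^{-m}=R_{R_z^m(y)}$ valid in any rack, so it does not even require $2$-reductivity.

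The only part with genuine content is \eqref{Oriented2}. Setting $a:=x\cdot y=R_y^n(x)$ and $b:=(y*x)\cdot x=R_x^{n+m}(y)$, the identity rewrites as $R_x^m R_y^m=R_b^m R_a^m$. Using the structural fact $R_{R_z^k(w)}=R_w$ I would conclude $R_a=R_x$ and $R_b=R_y$, so the right-hand side becomes $R_y^m R_x^m$; abelianness of $\lmlt$ then identifies this with the left-hand side. I do not anticipate any real obstacle: the whole verification is routine once the two structural consequences of $2$-reductivity (abelianness of $\lmlt$ and invariance of $R_y$ under every power of every $R_z$) have been extracted, and both are provided by the characterization of $2$-reductive quandles recalled just before the statement.
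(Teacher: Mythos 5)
Your proposal is correct and follows essentially the same route as the paper: both verify the two identities of Proposition \ref{Car Oriented}, using that right multiplications are automorphisms for \eqref{Oriented1} and that $2$-reductivity forces $\rho_{h(x)}=\rho_x$ for $h\in\RMlt(Q)$ together with abelianness of $\RMlt(Q)$ for \eqref{Oriented2}. Your write-up is merely more explicit (e.g.\ in checking that $(Q,*^m,/^m)$ is a quandle, which the paper takes for granted).
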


\begin{proof}
Clearly $\rho_x\in \aut{Q,*^n,/^n}$, i.e. \eqref{Oriented1} holds. Moreover $\rho_x=\rho_{h(x)}$ for every $x\in Q$ and every $h\in \lmlt(Q)$, and so we have:
$$(z*^m x)*^m y=(z*^m (x*^n y))*^m ((y*^m x)*^n x),$$
i.e. also \eqref{Oriented2} holds.
\end{proof}

\begin{lemma}
Let $(Q,*,/)$ be a $2$-reductive quandle. Then $(Q,*^n,*^m,*,/)$ is a bondle for every $n,m\in \mathbb{Z}$.
\end{lemma}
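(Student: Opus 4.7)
The plan is to apply Theorem \ref{theorem for bondles} to the quadruple $(Q,*^n,*^m,*,/)$. Its three hypotheses translate in our setting to: $(Q,*^n,*,/)$ is an oriented singquandle, $(Q,*^m,*,/)$ is an oriented singquandle, and the compatibility identity
\begin{align*}
(y*x)*^m x \approx (y*(x *^m y))*^m x
\end{align*}
holds for all $x,y\in Q$. The first two hypotheses are immediate from Lemma \ref{oriented lmlt abelian}, applied with parameters $(n,1)$ and $(m,1)$ respectively.

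It remains to verify the compatibility identity. The key ingredient is the same observation used in the proof of Lemma \ref{oriented lmlt abelian}: since $(Q,*,/)$ is $2$-reductive, $\lmlt(Q,*,/)$ is abelian, and hence $\rho_x = \rho_{h(x)}$ for every $x\in Q$ and every $h\in \lmlt(Q,*,/)$. Applied with $h = \rho_y^m \in \lmlt(Q,*,/)$ this gives $\rho_{x *^m y} = \rho_{\rho_y^m(x)} = \rho_x$. Therefore both sides of the identity reduce to $\rho_x^m(\rho_x(y)) = \rho_x^{m+1}(y)$, and the identity holds.

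There is no real technical obstacle: the whole argument rests on the same absorption-under-$\lmlt$ trick that underlies Lemma \ref{oriented lmlt abelian}. The only point requiring care is the notational bookkeeping, namely writing right multiplication under $*^m$ as $\rho_x^m$ and reading $x *^m y$ as $\rho_y^m(x)$, which makes the desired cancellation transparent.
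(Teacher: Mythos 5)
Your proof is correct and follows essentially the same route as the paper: both invoke Theorem \ref{theorem for bondles}, obtain the two oriented singquandle hypotheses from Lemma \ref{oriented lmlt abelian}, and verify the compatibility identity \eqref{eq_bondles} via the observation that $2$-reductivity forces $\rho_{x*^m y}=\rho_x$, hence $y*(x*^m y)=y*x$. Your write-up merely spells out the bookkeeping ($x*^m y=\rho_y^m(x)$, both sides equal $\rho_x^{m+1}(y)$) that the paper leaves implicit.
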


\begin{proof}
According to Lemma \ref{oriented lmlt abelian}, $(Q,*^n,*,/)$ and $(Q,*^m,*,/)$ are oriented singquandles. Moreover since $\rho_{x*^m y}=\rho_x$ we have that $y* x=y* (x*^m y)$.
\end{proof}

\bibliographystyle{amsalpha}
\bibliography{references} 

\end{document}